\newtheorem{theorem}{Theorem}
\newtheorem{lemma}{Lemma}
\newtheorem{assumption}{Assumption}
\newtheorem{remark}{Remark}
\newcommand{\wjcom}[1]{\textbf{\color{white!50!purple} (Wenjie comment: #1)}} 
\newcommand{\wjcom}[1]{}
\newcommand{\wwb}[1]{\textcolor{black}{#1}}
\def\BibTeX{{\rm B\kern-.05em{\sc i\kern-.025em b}\kern-.08em
    T\kern-.1667em\lower.7ex\hbox{E}\kern-.125emX}}
\begin{document}


\title{\LARGE \bf Human-in-the-loop: Real-time Preference Optimization
\thanks{
The authors are with Automatic Control Laboratory, EPFL, Switzerland. Email: wenbin.wang@epfl.ch, wenjie.xu@epfl.ch, colin.jones@epfl.ch. This work was supported by the Swiss National Science Foundation under NCCR Automation, grant agreement {51NF40\_180545} and the Swiss Federal Office of Energy SFOE as part of the SWEET consortium SWICE.}
}

\newcommand{\obj}{\Tilde{\Phi}}
\newcommand{\gest}{\Tilde{\nabla}}
\newcommand{\tr}[1]{\text{tr}(#1)}
\newcommand{\E}[1]{\bb{E}[#1]}
\newcommand{\Ef}[2]{\bb{E}[#1|\mathcal{F}_{#2}]}
\newcommand{\N}{\frac{1}{N}\mathbf{1}\mathbf{1}^{\top}}
\newcommand{\inff}[1]{#1_{\infty}}
\newcommand{\bb}[1]{\mathbb{#1}}
\newcommand{\norm}[1]{\|#1\|}

\author{Wenbin Wang, Wenjie Xu, Colin N. Jones}

\maketitle

\begin{abstract}

Optimization with preference feedback is an active research area with many applications in engineering systems where humans play a central role, such as building control and autonomous vehicles. While most existing studies focus on optimizing a static user utility, few have investigated its closed-loop behavior that accounts for system transients. In this work, we propose an online feedback optimization controller that optimizes user utility using pairwise comparison feedback with both optimality and closed-loop stability guarantees. By adding a random exploration signal, the controller estimates the descent direction based on the binary comparison feedback between two consecutive time steps. We analyze its closed-loop behavior when interacting with a nonlinear plant and show that, under mild assumptions, the controller converges to the optimal point without inducing instability. Theoretical findings are further validated through numerical experiments.
\end{abstract}
\section{Introduction}
Humans are the key components in engineering systems and the primary beneficiaries of many leading technologies, such as building control \cite{eichler2018humans} and human-robot collaboration \cite{ajoudani2018progress}. It is essential to design a human-aware controller capable of regulating the system in real time to optimize user latent utility. Existing controllers typically track a predefined reference, which is often derived from large population models, e.g., indoor room temperature for building control \cite{fanger1970thermal}. While being simple and easy to implement, this can introduce bias and lead to suboptimal performance, as it fails to account for individual differences. Moreover, without real-time human feedback, such a controller cannot respond to time-varying utility and is not robust to external disturbances.

As an emerging real-time control technique, Online Feedback Optimization (OFO) \cite{hauswirth2024optimization} has been effective in applications such as grid control \cite{ortmann2020experimental} and robot coordination \cite{terpin2022distributed}. By taking real-time system output, OFO can navigate the system toward its optimal point without requiring precise knowledge of the plant dynamics or measurement noise. Theoretical guarantees have been established for both first-order \cite{8673636} and zeroth-order \cite{he2023model} formulations. However, people are generally more adept at making relative comparisons than providing absolute evaluations of their utility  \cite{kahneman2013prospect}. As a result, human feedback often appears as pairwise comparisons, making it difficult to directly apply existing OFO schemes to design human-aware controllers with closed-loop guarantees.

Research on offline optimization with preference feedback has been rapidly expanding. In the finite action setting, preferences can be encoded in a preference matrix, where each entry gives the probability that one option is preferred over another. Notions of optimality such as the Copeland winner \cite{zoghi2015copeland} and the Borda winner \cite{urvoy2013generic} have been studied, and algorithms based on random exploration \cite{zoghi2014relative} are employed to identify the optimal choice. For continuous action spaces, a common assumption is that preferences are induced by a latent utility, which is often modeled with Gaussian Processes (GP). Heuristic strategies for sequential decision-making balance exploration and exploitation \cite{gonzalez2017preferential,Previtali_2023}, while regret guarantees can be established under assumptions such as when the utility lies in a Reproducing Kernel Hilbert Space (RKHS) \cite{xu2024principled}. In parallel, gradient-estimation methods from preference feedback have been explored \cite{yue2009interactively,saha2021dueling}, with optimality guarantees under assumptions such as smoothness and convexity \cite{saha2021dueling}. 

Despite recent advancements, most existing work considers a static problem and neglects transient system behavior. Many open challenges remain in developing online preference optimization algorithms that account for system transients. First, the stability of the closed-loop system is hard to quantify, as the binary nature of preference feedback renders the overall dynamics highly nonlinear. Moreover, people typically have limited knowledge about the plant dynamics, implying that directly following their preferences can drive the system toward instability. Second, tracking the optimal point in real time with a controller trained on offline data is difficult, as the utility function can be time-varying, subject to external disturbances, and environment-dependent \cite{kahneman2013prospect}. The presentation of alternatives can also shape individuals’ expressed preference, given that people are not always rational \cite{kahneman2013prospect}. A carefully designed mechanism for how humans interact with the controller is necessary to ensure stable and efficient system operation.

Inspired by the recently proposed model-free OFO with one-point residual estimation \cite{he2023model}, we introduce a novel OFO controller that leverages binary preference feedback to optimize user utility while ensuring closed-loop stability. To the best of our knowledge, it is the first work addressing the real-time preference optimization problem with closed-loop guarantees. Our approach employs a stochastic scheme, in which a random exploration signal is added into the system at each time step. Preference feedback is then collected based on the perturbed utilities between consecutive time steps. Unlike existing approaches \cite{yue2009interactively,saha2021dueling} that require two function evaluations at each time step, our method requires only one evaluation, making it well-suited for online implementation. Under mild assumptions, we show that the resulting update imitates a gradient descent step and provide theoretical guarantees on the stability and optimality of the closed-loop system. Theoretical results are further supported through numerical experiments on a thermal comfort optimization problem.
\section{Problem Formulation and Preliminaries}
\subsection{Notation}
Let $\mathbb{R}^n$ be the $n$-dimensional Euclidean space and $\mathbb{N}$ be the set of natural numbers. For a real-valued function $f:\mathbb{R}^n\rightarrow \mathbb{R}$, we denote its gradient at $x\in \mathbb{R}^{n}$ by $\nabla f(x)$. For a single input function $\sigma:\mathbb{R}\rightarrow \mathbb{R}$, we denote its derivative by $\sigma'(t)$. The realization of vector $x\in\mathbb{R}^n$ at time step $k$ is written as $x_k$.

\subsection{Problem Formulation}
Consider an exponentially stable plant with $n_x\in\mathbb{N}$ states and $n_u\in\mathbb{N}$ inputs
\begin{equation}
\label{eq: nonlinear system}
    x_{k+1} = f(x_k,u_k),
\end{equation}
where $f:\mathbb{R}^{n_x}\times\mathbb{R}^{n_u}\rightarrow \mathbb{R}^{n_x}$ is the state transition function, $x\in \mathbb{R}^{n_x}$ is the plant state, $u_k\in \mathbb{R}^{n_u}$ is the plant input. We assume that there exists a unique steady-state input-state map $h:\mathbb{R}^{n_u}\rightarrow \mathbb{R}^{n_x}$ such that $\forall u \in \mathbb{R}^{n_u}$, $h(u)= f(h(u),u)$.
\begin{assumption}
\label{ap: map lipschitz}
    The input-state map $h$ is $L_{h}$-Lipschitz continuous.
\end{assumption}
The assumption of an exponentially stable plant with Lipschitz continuous input-state map is satisfied for many systems, e.g., linear system $x_{k+1} = Ax_k+Bu_k$ with $A$ being stable. The results of this work can also be extended to include the output measurement $y_k=g(x_k,u_k)$, where additional assumptions, such as a Lipschitz input-output map, are necessary.

According to the converse Lyapunov theorem \cite[p. 194]{he2023model,khalil2002nonlinear}, for an exponentially stable plant, there exists a Lyapunov function $V:\mathbb{R}^{n_x}\times\mathbb{R}^{n_u}\rightarrow\mathbb{R}$ and positive parameters $\alpha_1,\alpha_2,\alpha_3$ such that for each fixed $u\in \mathbb{R}^{n_u}$,
\begin{align}
    &\alpha_1\|x-h(u)\|^2\leq V(x,u)\leq \alpha_2\|x-h(u)\|^2,\\
    &V(f(x,u),u)-V(x,u)\leq-\alpha_3\|x-h(u)\|^2.
\end{align}
The parameter $\alpha_3$ characterizes how fast the system stabilizes. A larger $\alpha_3$ indicates the system stabilizes to $h(u)$ with a larger rate.

In this work, we want to design a optimal controller such that \eqref{eq: nonlinear system} is stabilizing to the solution of the following optimization problem
\begin{equation}
\label{eq: constrained objective}
    \begin{split}
    \min_{x,u} \quad&\Phi(x,u),\\
    \text{s.t.} \quad& x = h(u),
    \end{split}
\end{equation}
where $\Phi:\mathbb{R}^{n_x}\times\mathbb{R}^{n_u}\rightarrow\mathbb{R}$ is the latent utility function. This utility function depends on both the system state and input, capturing real-world scenarios where these factors jointly have influences. For example, in building energy management, the objective often consists of both thermal comfort and control cost \cite{eichler2018humans}.
\begin{assumption}
\label{ap: objective lipschitz}
    $\forall u\in \mathbb{R}^{n_u}$, the latent utility $\Phi(x,u)$ is $L_{x}$-Lipschitz with respect to the first argument, i.e., $\|\Phi(x_1,u)-\Phi(x_2,u)\|\leq L_x\|x_1-x_2\|, \forall x_1, x_2 \in \mathbb{R}^{n_x}$, where $L_{x}$ is a uniform Lipschitz constant.
\end{assumption}

Problem \eqref{eq: constrained objective} can be reformulated as an unconstrained problem by replacing $x$ with $x=h(u)$ in $\Phi(x,u)$, i.e.,
\begin{equation}
\label{eq: unconstrained objective}
\min_{u} \tilde{\Phi}(u),
\end{equation}
where $\tilde{\Phi}(u)=\Phi(h(u),u)$.
\begin{assumption}
\label{ap: convex}
    The latent function $\tilde{\Phi}(u)$ is $L_{0}$-Lipschitz, $L_1$-smooth and $m$-strongly convex.
\end{assumption}
Assumption \ref{ap: convex} is commonly employed for theoretical analysis \cite{simpson2020analysis,wang2024online,wang2024decentralized} and is satisfied in applications, e.g., building control \cite{eichler2018humans} and frequency regulation \cite{zhao2015distributed}. \wwb{We denote the solution of \eqref{eq: unconstrained objective} by $u^*$. The assumption on strongly convexity assures that $u^*$ is unique.}

Classical numerical optimization methods, such as gradient descent, require the knowledge of $h$ to find the optimal solution \cite{hauswirth2024optimization}. However, this is impractical for systems where building a high-fidelity model is costly. To address this limitation, zeroth-order feedback optimization methods have emerged as a promising approach. It estimates the gradient using finite differences, either by considering plant transients \cite{he2023model} or assuming an algebraic plant \cite{wang2024online}. 

Nevertheless, finite difference fails with preference feedback since the true utility $\Phi(x,u)$ cannot be directly queried. It often appears in the form of pairwise comparisons, i.e., binary signals. To bridge this gap, we need to link the binary feedback with the utility through user models.

\subsection{User model}
 Given $u_1$ and $u_2$, we denote the event `$\tilde{\Phi}(u_1)<\tilde{\Phi}(u_2)$' by $u_1\succ u_2$, i.e., $u_1$ is better than $u_2$. The corresponding preference feedback is written as $\bm{1}_{u_1\succ u_2}$, which is
\begin{equation}
    \bm{1}_{u_1\succ u_2} = 
    \begin{cases}
    1, &\text{if $u_1$ is better},\\
    -1, &\text{if $u_2$ is better}.
\end{cases}
\end{equation}
To link the utility $\tilde{\Phi}$ with preference feedback, we adopt a probabilistic model as shown in Assumption \ref{ap: user model}.
\begin{assumption}
\label{ap: user model}
    The preference feedback $\bm{1}_{u_1\succ u_2}$ follows a Bernoulli distribution, i.e., $\mathbb{P}(\bm{1}_{u_1\succ u_2}=1)=\sigma(\tilde{\Phi}(u_2)-\tilde{\Phi}(u_1))$, where $\sigma(t) = \frac{1}{1+e^{-t}}$.
\end{assumption}
This model, i.e., the Bradley-Terry Model, among other probabilistic models such as the Thurstone-Mosteller model \cite[Section 2.2.3]{mikkola2024humans}, are commonly found in the literature for preferential learning \cite{yue2009interactively,xu2024principled}. The idea is that when $\tilde{\Phi}(u_1)$ is small, the probability of reporting $\bm{1}_{u_1\succ u_2}=1$ is large. We denote the Lipschitz constant and smoothness constant of $\sigma(t)$ by $L_{\sigma,0}$, $L_{\sigma,1}$, respectively. The result in this work also holds with other linking functions, provided they are monotonically increasing, rotation-symmetric, satisfy $\sigma(-\infty)=0$, $\sigma(\infty)=1$, and exhibit convexity for $x\leq 0$, essentially behaving like cumulative distribution functions \cite{yue2009interactively}.

\section{Online gradient estimation with preference feedback}
\label{section: 3}


We build on the existing result in model-free feedback optimization \cite{he2023model} and design a controller that estimates the descent direction with preference feedback from real-time state measurement. Different from existing preference optimization methods, our formulation considers the system transients explicitly. We assume that the binary preference feedback is collected from the user between two consecutive utility evaluations. The controller then regulates the system toward the point where the utility is minimized. This is summarized in Algorithm \ref{algorithm}.

\begin{algorithm}[!t]
    \renewcommand{\algorithmicrequire}{\textbf{Input:}}
    \renewcommand{\algorithmicensure}{\textbf{Output:}}
    \caption{Controller with comparison feedback}
    \label{algorithm}
    \begin{algorithmic}[1]
        \REQUIRE step size $\eta$, smoothing parameter $\delta$, $u_0 \in \bb{R}^{n_u}$, number of time steps $T$
        \FOR{$k=1, \ldots, T-1$}
            \STATE $x_{k+1} = f(x_k,u_k+\delta v_{k})$; \label{algorithm: 1}
            \STATE ask users to express their preference between $\Phi(x_{k+1},u_k+\delta v_k)$ and $\Phi(x_{k},u_{k-1}+\delta v_{k-1})$, sample $\bm{1}_{(x_{k+1},u_k+\delta v_{k})\succ (x_{k},u_{k-1}+\delta v_{k-1})}$;\label{algorithm: 2}
            \STATE update the input via \\$u_{k+1} = u_k + \frac{\eta}{2\delta} \bm{1}_{(x_{k+1},u_k+\delta v_{k})\succ (x_{k},u_{k-1}+\delta v_{k-1})} v_k$;\label{algorithm: 3}
        \ENDFOR
        \ENSURE $u_T$
    \end{algorithmic}
\end{algorithm}

At time step $k$, a random exploration signal $v_k$ is added to the system as shown in Line $\ref{algorithm: 1}$ of Algorithm \ref{algorithm}. It is drawn independent and identically distributed from the $(n_u-1)$-dimensional unit sphere $\mathbb{S}^{n_u-1}$ uniformly. We then use $x_{k+1}$ as an approximation of $h(u_k+\delta v_k)$, which allows us to approximate $\tilde{\Phi}(u_k+\delta v_k)$ with $\Phi(x_{k+1},u_k+\delta v_k)$.

In Line \ref{algorithm: 2}, the user is asked to provide a preference between $\Phi(x_{k+1},u_k+\delta v_k)$ and $\Phi(x_{k},u_{k-1}+\delta v_{k-1})$, i.e., sampling the random variable $\bm{1}_{(x_{k+1},u_k+\delta v_{k})\succ (x_{k},u_{k-1}+\delta v_{k-1})}$. We slightly abuse the notation here, denoting the event `$\Phi(x_{k+1},u_k+\delta v_k)<\Phi(x_{k},u_{k-1}+\delta v_{k-1})$' by $(x_{k+1},u_k+\delta v_{k})\succ (x_{k},u_{k-1}+\delta v_{k-1})$ and representing the preference feedback via the random variable $\bm{1}_{(x_{k+1},u_k+\delta v_{k})\succ (x_{k},u_{k-1}+\delta v_{k-1})}$ with the probability given by $\mathbb{P}(\bm{1}_{(x_{k+1},u_k+\delta v_{k})\succ (x_{k},u_{k-1}+\delta v_{k-1})}=1) = \sigma(\Phi(x_{k},u_{k-1}+\delta v_{k-1})-\Phi(x_{k+1},u_k+\delta v_k))$.

Finally, the control input $u_k$ is updated as shown in Line \ref{algorithm: 3}. When $\Phi(x_{k+1},u_k+\delta v_k)$ has a lower value, $\mathbb{P}(\bm{1}_{(x_{k+1},u_k+\delta v_{k})\succ (x_{k},u_{k-1}+\delta v_{k-1})}=1)$ is higher, implying that, with high probability, the algorithm updates $u_{k+1}$ toward $u_k+\delta v_k$, where the utility attains a smaller value.

The resulting closed-loop system takes the form
\begin{align}
\label{eq: algorithm 1}
    x_{k+1} = &f(x_k,u_k+\delta v_{k}),\\
    \label{eq: algorithm 2}
    u_{k+1} = &u_k + \frac{\eta}{2\delta} \bm{1}_{(x_{k+1},u_k+\delta v_{k})\succ (x_{k},u_{k-1}+\delta v_{k-1})} v_k.
\end{align}
The random perturbation $v_k$ facilitates exploration of multiple directions at each time step. The collected preference feedback indicates the descent direction of the unknown utility, effectively imitating a stochastic gradient descent step. Although \eqref{eq: algorithm 2} does not explicitly estimate $\nabla \tilde{\Phi}(u)$ since the preference feedback lacks information about utility values, we will demonstrate that \eqref{eq: algorithm 2} effectively performs gradient descent on a probability function, ultimately converging toward the optimal point in Section \ref{section: 4}.
\section{Performance Analysis}
\label{section: 4}
We first show that the closed-loop system is stable.
\begin{lemma}
\label{lemma: lyapunov stable}
    Let Assumptions \ref{ap: map lipschitz}-\ref{ap: user model} hold. $\mathbb{E}[V(x_k,u_k+\delta v_k)]\leq \mu^{k}\mathbb{E}[V(x_0,u_0+\delta v_0)]+\frac{a_1}{1-\mu}(2\delta^2+\eta+(\frac{\eta}{2\delta})^2)$, where $\mu = \frac{2\alpha_2}{\alpha_1}(1-\frac{\alpha_3}{\alpha_2})$, and $a_1 = 4\alpha_2L_{h}^2$.
\end{lemma}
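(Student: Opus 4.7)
The plan is to derive a one-step recursion of the form
$\mathbb{E}[V(x_{k+1}, u_{k+1}+\delta v_{k+1})] \leq \mu\,\mathbb{E}[V(x_k, u_k+\delta v_k)] + a_1\bigl(2\delta^2 + \eta + (\eta/(2\delta))^2\bigr)$
and then unroll it. Since $\sum_{j=0}^{k-1}\mu^j \leq 1/(1-\mu)$ whenever $\mu<1$, iterating the recursion immediately yields the claimed bound.

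I will split one time step into a \emph{dynamics substep}, in which $x$ evolves while the second argument of $V$ is kept at $u_k+\delta v_k$, and an \emph{input-shift substep}, in which that argument is moved to $u_{k+1}+\delta v_{k+1}$. The dynamics substep is handled by the converse Lyapunov inequalities already stated in the excerpt: combining the drop inequality with the sandwich bound gives $V(x_{k+1}, u_k+\delta v_k) \leq (1-\alpha_3/\alpha_2)\,V(x_k, u_k+\delta v_k)$, and hence $\|x_{k+1} - h(u_k+\delta v_k)\|^2 \leq \tfrac{1}{\alpha_1}(1-\alpha_3/\alpha_2)\,V(x_k, u_k+\delta v_k)$.

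For the input-shift substep I start from $V(x_{k+1}, u_{k+1}+\delta v_{k+1}) \leq \alpha_2\,\|x_{k+1}-h(u_{k+1}+\delta v_{k+1})\|^2$, insert $\pm h(u_k+\delta v_k)$ inside the norm, apply $\|a+b\|^2 \leq 2\|a\|^2+2\|b\|^2$, and invoke the $L_h$-Lipschitz continuity of $h$ from Assumption \ref{ap: map lipschitz}. This yields
$V(x_{k+1}, u_{k+1}+\delta v_{k+1}) \leq \mu\,V(x_k, u_k+\delta v_k) + 2\alpha_2 L_h^2\,\|(u_k+\delta v_k)-(u_{k+1}+\delta v_{k+1})\|^2$,
where $\mu=\tfrac{2\alpha_2}{\alpha_1}(1-\alpha_3/\alpha_2)$ arises exactly as in the statement.

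The main obstacle is extracting precisely the three terms $2\delta^2 + \eta + (\eta/(2\delta))^2$ with prefactor $a_1=4\alpha_2 L_h^2$, rather than a looser bound. Writing the preference indicator as $s\in\{-1,+1\}$, the control update gives $u_{k+1}-u_k = \tfrac{\eta}{2\delta}\,s\,v_k$, so the perturbation simplifies to $(\delta - \tfrac{\eta s}{2\delta})v_k - \delta v_{k+1}$. A second application of $\|a-b\|^2 \leq 2\|a\|^2+2\|b\|^2$, together with $\|v_k\|=\|v_{k+1}\|=1$, produces $2(\delta-\tfrac{\eta s}{2\delta})^2 + 2\delta^2 = 4\delta^2 - 2\eta s + 2(\eta/(2\delta))^2$. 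The cross term is absorbed via the deterministic bound $-2\eta s \leq 2\eta$ (valid because $|s|=1$). Multiplying by $2\alpha_2 L_h^2$ reproduces $a_1(2\delta^2+\eta+(\eta/(2\delta))^2)$ with $a_1=4\alpha_2 L_h^2$. Taking expectations of the resulting pointwise inequality and iterating closes the proof.
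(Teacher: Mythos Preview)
Your proposal is correct and follows essentially the same route as the paper: upper-bound $V$ by $\alpha_2\|\cdot\|^2$, insert $\pm h(u_k+\delta v_k)$, apply $\|a+b\|^2\le 2\|a\|^2+2\|b\|^2$, use the Lyapunov drop together with the lower sandwich bound to produce the factor $\mu=\tfrac{2\alpha_2}{\alpha_1}(1-\alpha_3/\alpha_2)$, and use $L_h$-Lipschitzness of $h$ on the input shift. The paper runs the recursion backward (relating $V(x_k,u_k+\delta v_k)$ to $V(x_{k-1},u_{k-1}+\delta v_{k-1})$) and bounds the shift via $|\delta-\tfrac{\eta s}{2\delta}|\le \delta+\tfrac{\eta}{2\delta}$ before squaring, whereas you index forward and expand the square then absorb the cross term with $-2\eta s\le 2\eta$; both routes give the identical constant $a_1(2\delta^2+\eta+(\eta/(2\delta))^2)$, so the difference is purely cosmetic.
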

\begin{proof}
    Proof can be found in Appendix \ref{proof: lyapunov stable}.
\end{proof}
\begin{remark}
    We consider the expected Lyapunov function $\mathbb{E}[V(x_k,u_k+\delta v_k)]$ as a stability indicator. The decay rate $\mu$ is a function of $\alpha_1,\alpha_2,\alpha_3$, which characterizes how quickly the system stabilizes to an equilibrium state under a constant input. A smaller $\mu$ implies that the system stabilizes more rapidly to the equilibrium point. According to Lemma \ref{lemma: lyapunov stable}, $\mathbb{E}[V(x_k,u_k+\delta v_k)]$ is upper-bounded by an exponentially decaying term and a constant. This follows from the fact that the increment of $\|u_k\|$ is bounded by $\frac{\eta}{2\delta}$, which is summable for an exponentially stable plant. Consequently, the system states remain bounded.
\end{remark}

To analyze optimality, we write
\begin{equation*}
    p_{u'}(u) = \mathbb{P}(\bm{1}_{u'\succ u}=1) = \sigma(\tilde{\Phi}(u)-\tilde{\Phi}(u'))
\end{equation*}
to simplify the notation. A smaller value of $\tilde{\Phi}(u)$ yields a lower $p_{u'}(u)$. This monotonic relationship allows the minimization of $\tilde{\Phi}(u)$ to be equivalently reformulated as the minimization of $p_{u'}(u)$, which is further supported by Lemma \ref{lemma: p}.
\begin{lemma}
\label{lemma: p}
    Let Assumptions \ref{ap: map lipschitz}-\ref{ap: user model} hold. $\forall u' \in \mathbb{R}^{n_u}$, $p_{u'}(u)$ is $L_{p,0}$-Lipschitz, and $L_{p,1}$-smooth with respect to $u$, where $L_{p,0}=L_{\sigma,0}L_{0}$, $L_{p,1}=\sigma'(0)L_{1}+L_{\sigma,1}L_{0}^2$. Furthermore, $p_{u'}(u)$ is partially convex with respect to $u$ if $\tilde{\Phi}(u)\leq\tilde{\Phi}(u')$.
\end{lemma}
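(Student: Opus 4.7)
My plan rests on the composition structure $p_{u'}(u) = \sigma(\tilde{\Phi}(u) - \tilde{\Phi}(u'))$, which lets me lift the regularity of the link function $\sigma$ (from Assumption \ref{ap: user model}) and of the surrogate utility $\tilde{\Phi}$ (from Assumption \ref{ap: convex}) through the chain rule. Each of the three claims then follows from a short calculation, and I would present them in the same order as the statement.

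For the Lipschitz bound $L_{p,0}$, I would simply chain the two Lipschitz estimates in sequence: $|p_{u'}(u_1) - p_{u'}(u_2)| \leq L_{\sigma,0}|\tilde{\Phi}(u_1) - \tilde{\Phi}(u_2)| \leq L_{\sigma,0} L_0 \|u_1 - u_2\|$, which directly yields $L_{p,0} = L_{\sigma,0} L_0$. For the smoothness bound, I would first compute by the chain rule $\nabla p_{u'}(u) = \sigma'(\tilde{\Phi}(u) - \tilde{\Phi}(u'))\nabla\tilde{\Phi}(u)$, set $t_i := \tilde{\Phi}(u_i) - \tilde{\Phi}(u')$, and apply the standard add--subtract trick
\[
\nabla p_{u'}(u_1) - \nabla p_{u'}(u_2) = \sigma'(t_1)\bigl[\nabla\tilde{\Phi}(u_1) - \nabla\tilde{\Phi}(u_2)\bigr] + \bigl[\sigma'(t_1) - \sigma'(t_2)\bigr]\nabla\tilde{\Phi}(u_2).
\]
The first bracket is controlled by the uniform ceiling $|\sigma'(\cdot)| \leq \sigma'(0)$ (the rotation-symmetric CDF-like $\sigma$ attains its maximal slope at zero) together with $L_1$-smoothness of $\tilde{\Phi}$; the second bracket is controlled by $L_{\sigma,1}$-smoothness of $\sigma$ combined with $L_0$-Lipschitzness of $\tilde{\Phi}$ invoked twice, once for $|t_1 - t_2| \leq L_0 \|u_1 - u_2\|$ and once for $\|\nabla\tilde{\Phi}(u_2)\| \leq L_0$. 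Summing gives exactly $L_{p,1} = \sigma'(0) L_1 + L_{\sigma,1} L_0^2$.

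For the partial-convexity claim under $\tilde{\Phi}(u) \leq \tilde{\Phi}(u')$, I would (formally) compute the Hessian
\[
\nabla^2 p_{u'}(u) = \sigma''(t)\,\nabla\tilde{\Phi}(u)\nabla\tilde{\Phi}(u)^\top + \sigma'(t)\,\nabla^2\tilde{\Phi}(u), \qquad t = \tilde{\Phi}(u) - \tilde{\Phi}(u'),
\]
and show both terms are PSD whenever $t \leq 0$: the first because $\sigma''(t) \geq 0$ on the negative half-line (convexity of $\sigma$ on $(-\infty,0]$, which is part of the standing assumptions on the link) makes the rank-one outer product PSD; the second because $\sigma'(t) > 0$ by monotonicity of $\sigma$ and $\nabla^2\tilde{\Phi} \succeq m I$ by $m$-strong convexity. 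The main obstacle, and the only step I would handle with extra care, is that Assumption \ref{ap: convex} only guarantees $C^{1,1}$ regularity of $\tilde{\Phi}$, so a pointwise Hessian need not exist; I would therefore replace the Hessian argument by a first-order secant inequality, invoking one-dimensional convexity of $\sigma$ on $(-\infty,0]$ applied to the convex combination $\lambda t_1 + (1-\lambda)t_2$ and using the convexity of the sublevel set $\{u : \tilde{\Phi}(u) \leq \tilde{\Phi}(u')\}$ (itself a consequence of strong convexity of $\tilde{\Phi}$) to keep the argument within the allowed regime.
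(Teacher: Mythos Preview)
Your proposal is correct and, for the Lipschitz and smoothness claims, matches the paper's proof essentially line-for-line: the same direct chaining for $L_{p,0}$, the same add--subtract splitting of $\nabla p_{u'}(u_1)-\nabla p_{u'}(u_2)$, and the same use of $|\sigma'(\cdot)|\le\sigma'(0)$ together with $\|\nabla\tilde\Phi\|\le L_0$ to arrive at $L_{p,1}$. For the partial-convexity claim the paper gives no argument of its own and simply cites \cite{yue2009interactively}; your first-order secant argument (monotone $\sigma$ plus convexity of $\sigma$ on $(-\infty,0]$ applied on the convex sublevel set of $\tilde\Phi$) is the standard route and is valid as written, so you are slightly more self-contained than the paper here.
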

\begin{proof}
    Proof on partially-convexity can be found in \cite{yue2009interactively}. The rest of the proof can be found in Appendix \ref{proof: p}.
\end{proof}
\begin{remark}
    Since $\sigma(t)$ is a bounded and smooth function, the Lipschitz continuity and smoothness properties are preserved for the composed function $p_{u'}(u)$. Convexity is also partially preserved since $\sigma(t)$ is partially convex. 
\end{remark}
Under the assumption of strong convexity, we show that, for any fixed $u'$, the minimizer of $p_{u'}(u)$ is unique and coincides with $u^*$.
\begin{lemma}
\label{lemma: unique solution}
    Let Assumptions \ref{ap: map lipschitz}-\ref{ap: user model} hold and $u^*$ be the unique solution of \eqref{eq: unconstrained objective}, then $\forall u' \in \mathbb{R}^{n_u}$, $u^*$ is also the unique minimizer of $p_{u'}(u)$.
\end{lemma}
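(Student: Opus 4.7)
The argument is short and relies on transferring uniqueness through a strictly monotonic composition. The plan is to write $p_{u'}(u) = (\sigma \circ g)(u)$ with $g(u) = \tilde{\Phi}(u) - \tilde{\Phi}(u')$, where the subtracted term is a constant with respect to $u$. Since $\sigma(t) = 1/(1+e^{-t})$ has derivative $\sigma'(t) = \sigma(t)(1-\sigma(t)) > 0$ for all $t$, it is strictly monotonically increasing on $\mathbb{R}$. Consequently, the set of minimizers of $\sigma \circ g$ coincides exactly with the set of minimizers of $g$, and they are unique together.

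Next I would invoke Assumption \ref{ap: convex}: $\tilde{\Phi}$ is $m$-strongly convex, so $u^*$ is its unique global minimizer. Hence $g(u) = \tilde{\Phi}(u) - \tilde{\Phi}(u')$ also has $u^*$ as its unique minimizer (the constant shift is irrelevant). For any $u \neq u^*$, strong convexity gives $g(u) > g(u^*)$, and by strict monotonicity of $\sigma$,
\begin{equation*}
p_{u'}(u) = \sigma(g(u)) > \sigma(g(u^*)) = p_{u'}(u^*),
\end{equation*}
which establishes that $u^*$ is the unique minimizer of $p_{u'}$.

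There is no real obstacle here; the only point worth flagging is what one should \emph{not} do. Lemma \ref{lemma: p} only gives partial convexity of $p_{u'}$ (on the sublevel region $\tilde{\Phi}(u) \leq \tilde{\Phi}(u')$), so one cannot simply quote strong convexity of $p_{u'}$ to conclude uniqueness. The monotonic-composition route above sidesteps convexity of $p_{u'}$ entirely and uses only strong convexity of $\tilde{\Phi}$ together with strict monotonicity of the link function, which is precisely the general property of $\sigma$ highlighted after Assumption \ref{ap: user model}. This also clarifies that the conclusion of the lemma carries over unchanged to any strictly increasing link function, not only the logistic one.
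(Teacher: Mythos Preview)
Your proof is correct and follows essentially the same approach as the paper: both arguments use strict monotonicity of $\sigma$ together with the uniqueness of the minimizer of $\tilde{\Phi}$ from strong convexity. The only cosmetic difference is that the paper phrases uniqueness via contradiction (two minimizers of $p_{u'}$ would force two minimizers of $\tilde{\Phi}$), whereas you argue directly that $p_{u'}(u) > p_{u'}(u^*)$ for $u \neq u^*$.
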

\begin{proof}
    Proof can be found in Appendix \ref{proof: unique solution}.
\end{proof}

Based on Lemmas \ref{lemma: p} and \ref{lemma: unique solution}, together with standard techniques from convex analysis, we can show that gradient descent with respect to $p_{u'}(u)$
\begin{equation}
\label{eq: gradient descent in p}
    u_{k+1} = u_k-\eta \nabla p_{u_k}(u_k)
\end{equation} converges to $u^*$, where $\nabla p_{u_k}(u_k)$ denotes the gradient of $p_{u_k}(u)$ with respect to $u$ evaluated at $u_k$. Update \eqref{eq: gradient descent in p} corresponds to the ideal case in which the function $h$ is known. In other words, it can be shown that $\nabla \tilde{\Phi}(u_k)$ and $\nabla p_{u_k}(u_k)$ are proportional up to a constant scaling factor. Building on this evidence, we compactly write \eqref{eq: algorithm 2} as
\begin{equation}
\label{eq: error formulation}
    u_{k+1} = u_k-\eta(\nabla p_{u_k}(u_k) + e_k),
\end{equation}
where the error term $e_k$ is defined as
\begin{equation}
    e_k \!= \!-\frac{1}{2\delta}\bm{1}_{(x_{k\!+\!1},u_k \!+ \!\delta v_k)\succ (x_{k},u_{k\!-\!1}\!+\!\delta v_{k\!-\!1})}v_k\!-\!\nabla p_{u_k}(u_k).
\end{equation}
In \eqref{eq: error formulation}, we interpret \eqref{eq: algorithm 2} as an instance of \eqref{eq: gradient descent in p} with an additional error $e_k$. It exists because $h(u_k+\delta v_k)$ is approximated by $x_{k+1}$, and the gradient is estimated via random perturbations. The boundedness of $e_k$ is established in Lemma \ref{lemma: bounded error}.

\begin{lemma}
\label{lemma: bounded error}
    Let Assumptions \ref{ap: map lipschitz}-\ref{ap: user model} hold. $\|\mathbb{E}[e_k|\mathcal{F}_k]\|\leq \sqrt{R_1V(x_{k-1},u_{k-1}+\delta v_{k-1})+R_2}$, where $R_1 = \frac{2L_{\sigma,0}^2L_{x}^2(\mu+1)}{\alpha_2\delta^2}\mu$, and $R_2=\frac{2L_{\sigma,0}^2L_x^2a_1}{\alpha_2\delta^2}(2\delta^2+\eta+(\frac{\eta}{2\delta})^2)\mu+2a_2^2\delta^2$, $a_2=L_{p,1}\sqrt{n}+(\sigma'(0)L_{1}+L_{p,1})(1+\frac{\eta}{\delta})$
\end{lemma}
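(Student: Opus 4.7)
My plan is to bound $\|\mathbb{E}[e_k|\mathcal{F}_k]\|$ by decomposing the gradient-estimation error into three conceptually distinct pieces and then recombining them with $(a+b)^2\leq 2a^2+2b^2$. First, I would peel off the Bernoulli randomness via the tower property: using Assumption \ref{ap: user model},
\[
\mathbb{E}\!\left[\bm{1}_{(x_{k+1},u_k+\delta v_k)\succ(x_k,u_{k-1}+\delta v_{k-1})}\,\big|\,v_k,\mathcal{F}_k\right]=2\sigma\!\bigl(\Phi(x_k,u_{k-1}+\delta v_{k-1})-\Phi(x_{k+1},u_k+\delta v_k)\bigr)-1,
\]
and the constant $-1$ drops out after multiplying by $v_k$ and averaging, since $v_k$ is uniform on $\mathbb{S}^{n_u-1}$ with $\mathbb{E}[v_k]=0$. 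The conditional expectation of the estimator then reduces to $-\tfrac{1}{\delta}\mathbb{E}_{v_k}[\sigma(\Phi(x_k,u_{k-1}+\delta v_{k-1})-\Phi(x_{k+1},u_k+\delta v_k))v_k]$.

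Next I would replace $\Phi(x_{k+1},\cdot)$ and $\Phi(x_k,\cdot)$ inside $\sigma$ by their steady-state surrogates $\tilde{\Phi}(u_k+\delta v_k)$ and $\tilde{\Phi}(u_{k-1}+\delta v_{k-1})$. By Assumption \ref{ap: objective lipschitz} and the $L_{\sigma,0}$-Lipschitz property of $\sigma$, each substitution costs at most a term proportional to $\|x_{k+1}-h(u_k+\delta v_k)\|$ or $\|x_k-h(u_{k-1}+\delta v_{k-1})\|$. The Lyapunov sandwich $\alpha_1\|x-h(u)\|^2\leq V(x,u)\leq\alpha_2\|x-h(u)\|^2$ combined with the decay $V(f(x,u),u)\leq (1-\alpha_3/\alpha_2)V(x,u)$ bounds the second directly in terms of $V(x_{k-1},u_{k-1}+\delta v_{k-1})$. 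For the first I need to shift the input argument of $V$ via the inequality $V(x,u')\leq\tfrac{2\alpha_2}{\alpha_1}V(x,u)+2\alpha_2 L_h^2\|u-u'\|^2$ (which follows from the quadratic sandwich and Assumption \ref{ap: map lipschitz}), then apply decay once more. This introduces the input displacement $\|u_k+\delta v_k-(u_{k-1}+\delta v_{k-1})\|\leq \tfrac{\eta}{2\delta}+2\delta$ and, once squared, reproduces the compound prefactor $(\mu+1)\mu$ of $R_1$ together with the $\mu(2\delta^2+\eta+(\eta/(2\delta))^2)$ contribution to $R_2$.

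What remains is the idealized quantity $\tfrac{1}{\delta}\mathbb{E}_{v_k}[p_{u_{k-1}+\delta v_{k-1}}(u_k+\delta v_k)\,v_k]$, obtained by using $\sigma(-t)=1-\sigma(t)$ and discarding the constant against $v_k$. A Stokes-type identity for sphere sampling rewrites this as the gradient of the ball-smoothed $\hat p_{u_{k-1}+\delta v_{k-1}}$ evaluated at $u_k$. I would split the gap to $\nabla p_{u_k}(u_k)$ by the triangle inequality into (i) the standard smoothing bias, controlled by the $L_{p,1}$-smoothness from Lemma \ref{lemma: p} together with the dimension-dependent ball-to-sphere correction (giving the $L_{p,1}\sqrt{n}\,\delta$ piece of $a_2$); and (ii) a reference-point shift $\|\nabla p_{u_{k-1}+\delta v_{k-1}}(u_k)-\nabla p_{u_k}(u_k)\|$, which I bound using the identity $\nabla p_{u'}(u)=\sigma'(\tilde{\Phi}(u)-\tilde{\Phi}(u'))\nabla\tilde{\Phi}(u)$ together with Assumption \ref{ap: convex} and the update bound $\|u_k-u_{k-1}\|\leq \eta/(2\delta)$. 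Squared, this bias produces the $2a_2^2\delta^2$ summand of $R_2$.

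Assembling the three stages via $(a+b)^2\leq 2a^2+2b^2$ and taking a square root yields the stated bound. I expect Stage 2 to be the main obstacle: the bookkeeping required to transport the Lyapunov function across both its state and input arguments must land exactly on the prefactors $\mu$, $\mu+1$, and $(2\delta^2+\eta+(\eta/(2\delta))^2)$ of $R_1,R_2$; any slack there inflates the final constants. Stage 3 is more routine once the reference-point shift is cleanly isolated from the smoothing bias, and Stage 1 is essentially algebraic manipulation of the Bernoulli expectation.
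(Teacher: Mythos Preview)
Your three-stage decomposition—peel off the Bernoulli expectation, substitute steady-state for transient values via the Lipschitz constants of $\sigma$ and $\Phi$ together with the Lyapunov sandwich/decay/input-shift, then split the residual into smoothing bias plus reference-point shift—is exactly the paper's argument; the paper packages Stage~2 by invoking the one-step recursion from the proof of Lemma~\ref{lemma: lyapunov stable} and handles the smoothing bias in Stage~3 by citing \cite[Lemma~1]{TANG2023110741}. The only slip is that the reference in Stage~3 is $u_{k-1}+\delta v_{k-1}$, not $u_{k-1}$, so the relevant displacement is $\|u_k-(u_{k-1}+\delta v_{k-1})\|\le \eta/(2\delta)+\delta$, which is precisely what generates the $(1+\eta/\delta)$ factor inside $a_2$.
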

\begin{proof}
    Proof can be found in Appendix \ref{proof: bounded error}.
\end{proof}
In Lemma \ref{lemma: bounded error}, we observe that at time step $k$, conditioned on the natural filtration $\mathcal{F}_k$, the error $e_k$ is bounded by $R_1V(x_{k-1},u_{k-1}+\delta v_{k-1})$ and $R_2$, where $V(x_{k-1},u_{k-1}+\delta v_{k-1})$ is bounded as established in Lemma \ref{lemma: lyapunov stable}. Additionally, we observe that $R_1 = \mathcal{O}(\mu)$ and $R_2 = \mathcal{O}(\mu,\delta^2)$. Both of them decrease as $\mu$ decreases. This result is consistent with the approximation discussed in Section \ref{section: 3}, as a smaller $\mu$ enables $x_{k+1}$ to more accurately approximate $h(u_k+\delta v_k)$.

Now we are ready to present the main convergence result.
\begin{theorem}
\label{theorem: main result}
    $\forall k'$ and $\forall k> k'$, the expected distance to $u^*$ is bounded, i.e., $\mathbb{E}[\|u_{k}-u^*\|^2]\leq(\frac{1+\rho}{2})^{k-k'}\mathbb{E}[\|u_{k'}-u^*\|^2]+\mathcal{O}(\mu,\mu^{k'},\delta)$, where $\rho=1-2\sigma'(0)m\eta$.
\end{theorem}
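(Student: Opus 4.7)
The plan is to run a standard strong-convexity-based analysis on the error-perturbed form \eqref{eq: error formulation} and then fold in the residual bounds from Lemma \ref{lemma: bounded error} and Lemma \ref{lemma: lyapunov stable}. Starting from
\[
\|u_{k+1}-u^*\|^2 = \|u_k-u^*\|^2 - 2\eta\langle\nabla p_{u_k}(u_k)+e_k,u_k-u^*\rangle + \eta^2\|\nabla p_{u_k}(u_k)+e_k\|^2,
\]
I would condition on the natural filtration $\mathcal{F}_k$, under which $u_k$ (and hence $\nabla p_{u_k}(u_k)$) is measurable while $v_k$ is fresh randomness. The squared-norm term is bounded deterministically: by Lemma \ref{lemma: p}, $\|\nabla p_{u_k}(u_k)\|\leq L_{p,0}$, and by construction $\|e_k\|\leq \tfrac{1}{2\delta}+L_{p,0}$, so this contributes an $\mathcal{O}(\eta^2/\delta^2)$ remainder.

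For the linear term, I would apply the chain rule $\nabla p_{u'}(u)=\sigma'(\tilde{\Phi}(u)-\tilde{\Phi}(u'))\nabla\tilde{\Phi}(u)$, which at $u'=u=u_k$ collapses to $\nabla p_{u_k}(u_k)=\sigma'(0)\nabla\tilde{\Phi}(u_k)$. Combining $\nabla\tilde{\Phi}(u^*)=0$ with $m$-strong convexity of $\tilde{\Phi}$ (Assumption \ref{ap: convex}) gives the coercivity estimate $\langle\nabla p_{u_k}(u_k),u_k-u^*\rangle\geq \sigma'(0)m\|u_k-u^*\|^2$. The remaining error cross term $-2\eta\langle\mathbb{E}[e_k|\mathcal{F}_k],u_k-u^*\rangle$ is handled by Young's inequality calibrated to absorb exactly half of the strong-convexity margin, leaving a residual $\tfrac{\eta}{\sigma'(0)m}\|\mathbb{E}[e_k|\mathcal{F}_k]\|^2$. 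Taking full expectation yields the one-step recursion
\[
\mathbb{E}[\|u_{k+1}-u^*\|^2] \leq \tfrac{1+\rho}{2}\mathbb{E}[\|u_k-u^*\|^2] + \tfrac{\eta}{\sigma'(0)m}\mathbb{E}[\|\mathbb{E}[e_k|\mathcal{F}_k]\|^2] + C\eta^2/\delta^2,
\]
in which the contraction factor $\tfrac{1+\rho}{2}=1-\sigma'(0)m\eta$ is exactly the one stated in the theorem.

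Substituting Lemma \ref{lemma: bounded error} to bound $\mathbb{E}[\|\mathbb{E}[e_k|\mathcal{F}_k]\|^2]\leq R_1\mathbb{E}[V(x_{k-1},u_{k-1}+\delta v_{k-1})]+R_2$ and then Lemma \ref{lemma: lyapunov stable} to expand the Lyapunov expectation, the per-step noise decomposes into a transient piece of order $R_1\mu^{k-1}$ and a steady-state piece of order $R_1+R_2+\eta^2/\delta^2$. Unrolling the recursion from $k'$ to $k$ convolves the contraction $\tfrac{1+\rho}{2}$ against each piece. The leading term is $(\tfrac{1+\rho}{2})^{k-k'}\mathbb{E}[\|u_{k'}-u^*\|^2]$. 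The transient piece produces a geometric convolution $\sum_{j=k'}^{k-1}(\tfrac{1+\rho}{2})^{k-1-j}\mu^{j-1}$, which is dominated (up to a constant depending on $|\tfrac{1+\rho}{2}-\mu|^{-1}$) by $\mu^{k'-1}$; this is the source of the $\mu^{k'}$ in the statement. The steady-state piece produces a bounded geometric sum times $\mathcal{O}(R_1+R_2+\eta^2/\delta^2)$, and since $R_1=\mathcal{O}(\mu)$ and $R_2=\mathcal{O}(\mu,\delta^2)$ by Lemma \ref{lemma: bounded error}, this consolidates into the advertised $\mathcal{O}(\mu,\delta)$ under a compatible scaling of $\eta$ and $\delta$.

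The hard part is the bookkeeping of the three residual scales $\mu$, $\mu^{k'}$ and $\delta$. The $\mu^{k'}$ contribution only survives because the Lyapunov bound in Lemma \ref{lemma: lyapunov stable} still carries an exponentially decaying initial-condition term at intermediate time $k'$, and isolating it cleanly requires the two-rate geometric-sum estimate above rather than a single contraction argument. A secondary subtlety is that the preference-based estimator has second moment of order $1/\delta^2$, so the $\eta^2\|\cdot\|^2$ term is only controlled when the step size and smoothing parameter are chosen compatibly (for example $\eta^2/\delta^2=\mathcal{O}(\delta)$); this regime is implicit in the $\mathcal{O}$-notation of the theorem but should be stated explicitly when filling in the constants.
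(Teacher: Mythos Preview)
Your proof is correct but takes a different route from the paper. The paper keeps the cross term $-2\eta\langle \mathbb{E}[e_k\mid\mathcal{F}_k],u_k-u^*\rangle$ as a product and bounds it by Cauchy--Schwarz, which yields a recursion of the quadratic form $a_{k+1}^2\le \rho\,a_k^2 + b_k a_k + c$ with $a_k=\sqrt{\mathbb{E}[\|u_k-u^*\|^2]}$, $\rho=1-2\sigma'(0)m\eta$, $b_k=2\eta\sqrt{b_1\mu^{k-1}+b_2}$, and $c=\eta^2$. To resolve this recursion the paper introduces an auxiliary sequence lemma that converts it into the claimed bound with effective rate $\rho'\le\tfrac{1+\rho}{2}$. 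You instead split the cross term by Young's inequality, sacrificing half of the strong-convexity margin up front so the recursion is already linear in $\mathbb{E}[\|u_k-u^*\|^2]$ with contraction $\tfrac{1+\rho}{2}$, and then unroll directly via a two-rate geometric convolution. Both routes land on the same exponential rate and the same $\mathcal{O}(\mu,\mu^{k'},\delta)$ residual; yours is more elementary and avoids the bespoke sequence lemma, while the paper's version tracks the dependence on $b_{k'}$ and $c$ slightly more explicitly through the closed-form $a_{k'}^*$. Your observation that the squared-norm term is $\mathcal{O}(\eta^2/\delta^2)$ rather than just $\eta^2$ is in fact sharper than what the paper records; under the scaling you mention this is absorbed into the $\mathcal{O}$-term either way.
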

\begin{proof}
    Proof can be found in Appendix \ref{proof: main result}.
\end{proof}
We adopt the expected distance to $u^*$, i.e., $\mathbb{E}[\|u_{k}-u^*\|^2]$, as the error metric, which is commonly used in the literature on online zeroth-order optimization \cite{wang2024online,tang2023zeroth}. Our results indicate that, for any fixed time step $k'$, $\mathbb{E}[\|u_{k}-u^*\|^2]$ is bounded by $\mathbb{E}[\|u_{k'}-u^*\|^2]$, scaled by an exponentially decaying factor, and a constant term of order $\mathcal{O}(\mu,\mu^{k'},\delta)$ that depends on $k'$. In steady state, $\mathbb{E}[\|u_{k'}-u^*\|^2]$ vanishes, so that the bound reduces to $\mathcal{O}(\mu,\mu^{k'},\delta)$. Since $\mu^{k'}$ decreases as $k'$ increases for $\mu<1$, the steady-state error is eventually fully characterized by $\mathcal{O}(\mu,\delta)$.
\section{Numerical simulation}
To demonstrate that the controller \eqref{eq: algorithm 2} is capable of identifying $u^*$, we perform numerical simulations on a Linear Time-Invariant (LTI) system defined by
\begin{equation}
    \begin{split}
        x_{k+1} = Ax_k + Bu_k,
    \end{split}
\end{equation}
where $A\in\mathbb{R}^{n_x\times n_x}$, $B\in\mathbb{R}^{n_x\times n_u}$ are the system matrices. This system has an invertible steady-state input-output map $H = (I-A)^{-1}B$ and is pre-stabilized by a lower-level controller, ensuring that the spectral radius of $A$ is less than one. Such systems frequently arise in applications, including building control \cite{lian2023adaptive}, and power systems \cite{zhao2015distributed}. 
\subsection{A simple example}
To analyze the impact of plant transients on the algorithm's performance, we first consider the quadratic problem
\begin{align*}
    \min&\quad(x-x_{\text{ref}})^{\top}(x-x_{\text{ref}}),\\
    s.t. &\quad x = Hu,
\end{align*}
where the solution is $u = H^{-1}x_{\text{ref}}$. We set
$A \!= \!\begin{bmatrix}
        c&1\\
        0&c
\end{bmatrix}$, $B$ to be the identity matrix and $x_{\text{ref}} = [100, 100]^{\top}$. The parameter $c$ varies between $c = 0.1$ and $c = 0.7$ to represent plants with different decay rates. The simulation parameters are chosen as $\eta=0.1$ and $\delta = 0.5$. The results are shown in Fig. \ref{fig: result for simple system}.

\begin{figure}[t!]
    \subfloat[c = 0.1.\label{fig: random 0.1}]{
       \includegraphics[width=0.47\linewidth]{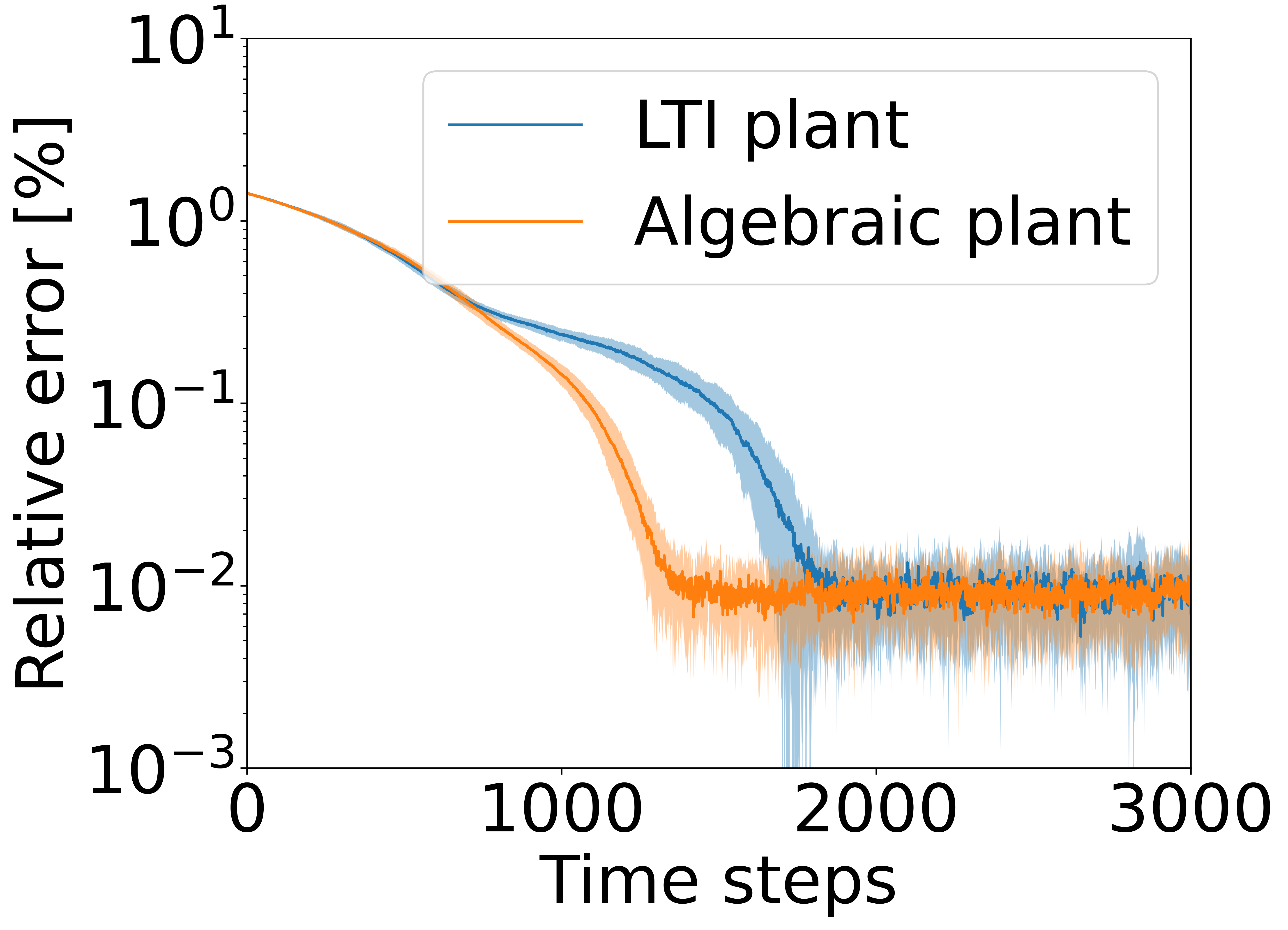}}
    \hfill
    \subfloat[c = 0.7.\label{fig: random 0.7}]{
        \includegraphics[width=0.47\linewidth]{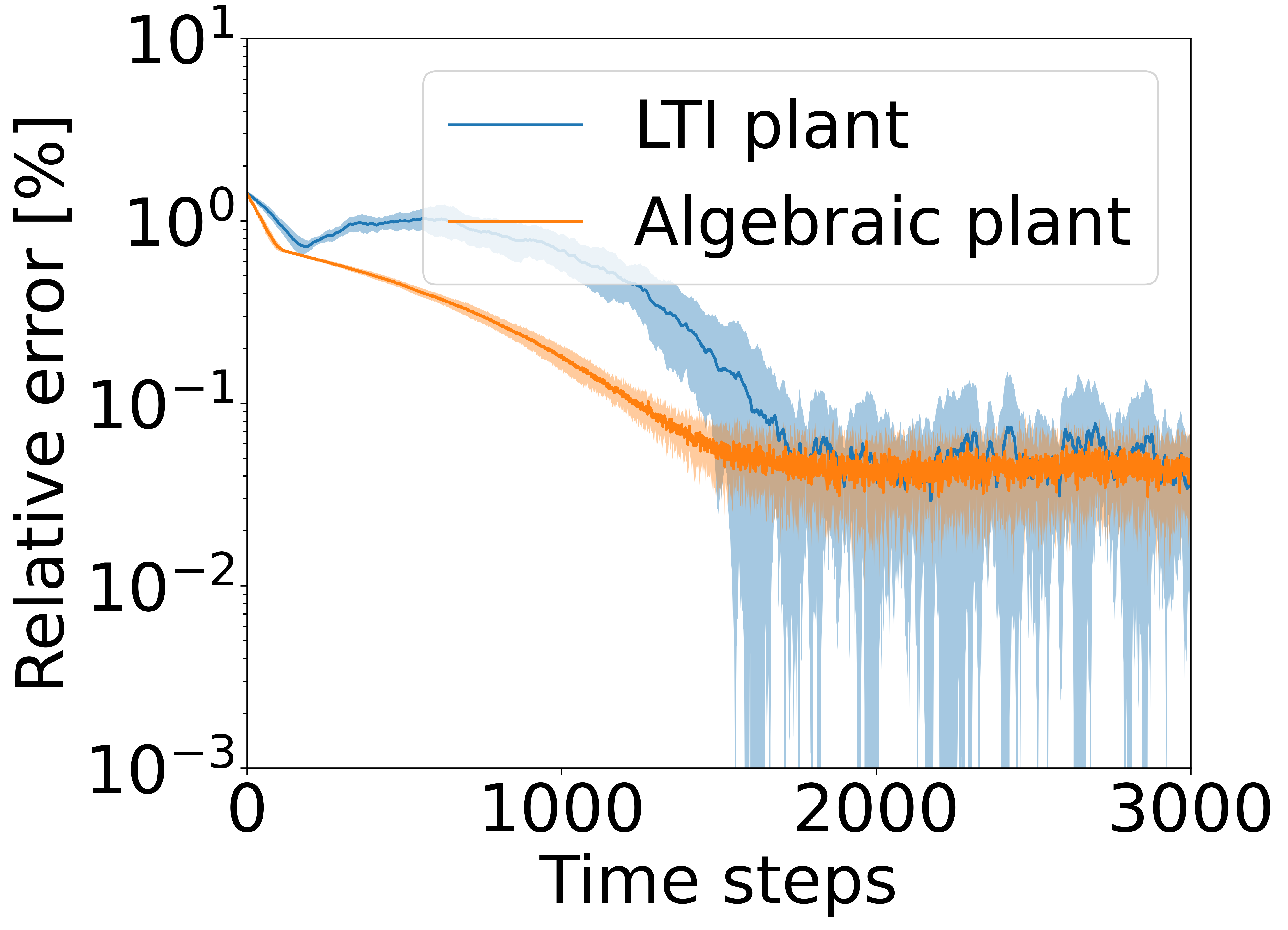}}
    \caption{Quadratic problem.}
    \label{fig: result for simple system}
\end{figure}

In Fig. \ref{fig: result for simple system}, the relative error $\|x_k-x_{\text{ref}}\|/\|x_{\text{ref}}\|$ on a logarithmic scale is shown for $c=0.1$ and $c=0.7$. The solid line represents the mean value over 20 simulations, while the shaded region indicates one standard deviation. The orange line corresponds to the algebraic plant, which assumes $H$ is known and samples $\bm{1}_{u_{k}+\delta v_k\succ u_{k-1}+\delta v_{k-1}}$ directly, whereas the blue line represents \eqref{eq: algorithm 2}. At the steady state, both \eqref{eq: algorithm 2} and its steady-state counterpart achieve a comparable level of accuracy. However, for the slower system ($c\!=\!0.7$), \eqref{eq: algorithm 2} exhibits a larger overshoot and higher steady-state variance. This behavior is expected, since the transient error becomes significant for slower systems, resulting in larger overshoot.

\subsection{Thermal comfort optimization}
Next, we consider a thermal comfort optimization problem, in which a building is represented by an LTI system with 13 states \cite{lian2023adaptive}. An occupant's thermal comfort utility is represented with the well-known Predictive Mean Vote (PMV) model \cite{fanger1970thermal}. The PMV output is typically expressed in terms of the Predicted Percentage of Dissatisfied (PPD) index, which is a nonconvex function of room temperature. Between any two indoor temperatures, the preference feedback is sampled according to Assumption \ref{ap: user model} where $\tilde{\Phi}$ is represented by PPD. The goal is to identify the room temperature that minimizes PPD using the controller \eqref{eq: algorithm 2}. For other PMV parameters, we assume the occupant is typing, wearing sweatpants, T-shirt and shoes or sandals (default setting in the PMV model).

In Fig. \ref{fig: result for building control}, the mean indoor temperature with \eqref{eq: algorithm 2} over 20 simulations (blue line) is plotted, while the orange line represents the case with a noise-free user model, i.e., $\bm{1}_{(x_{k+1},u_k+\delta v_{k})\succ (x_{k},u_{k-1}+\delta v_{k-1})} = \text{sign}(\Phi(x_{k},u_{k-1}+\delta v_{k-1})-\Phi(x_{k+1},u_k+\delta v_k))$. The shaded region represents one standard deviation, and the black horizontal line represents the true optimal temperature. With careful tuning of $\eta$ and $\delta$, controller \eqref{eq: algorithm 2} can track the optimal point effectively without large overshoot. which demonstrates its potential for learning a human's utility in real-world applications. Meanwhile, the algorithm exhibits a higher convergence rate with noise-free feedback. Quantifying its closed-loop behavior from a theoretical perspective remains an interesting direction for future work.

\begin{figure}[t!]
    \includegraphics[width=\linewidth]{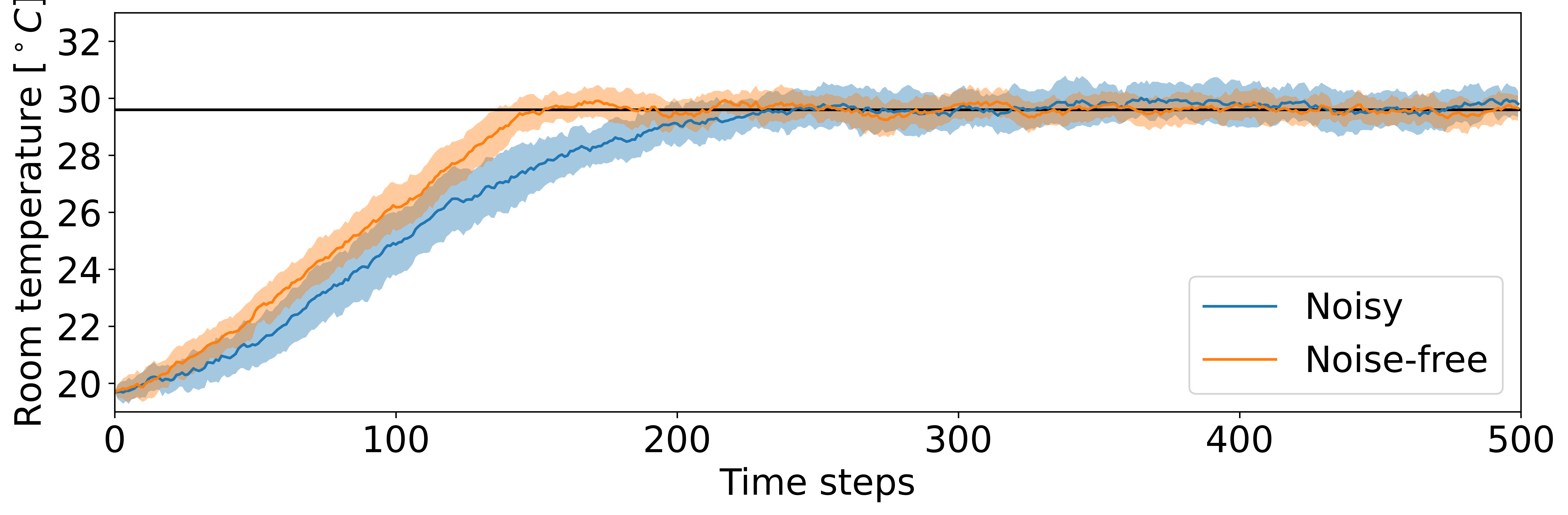}
    \caption{Thermal comfort optimization.}
    \label{fig: result for building control}
\end{figure}
\section{Conclusion}
In this work, we developed a human-aware controller that utilizes preference feedback to optimize utility while accounting for system transients. We derived an explicit upper bound on the error introduced by approximating the steady-state input-state map using the real-time state measurements. We analyzed the impact of both the system decay rate and smoothing parameters on the stability and optimality of the closed-loop system. Numerical experiments on a thermal comfort optimization task demonstrate its potential for solving real-world problems. Further research directions include extending the theoretical framework to alternative user models (e.g., noise-free model) and exploring real-world applications such as product design and chemical selection.

\bibliographystyle{IEEEtran}
\bibliography{references}

\appendix
\subsection{Proof of Lemma \ref{lemma: lyapunov stable}}
\label{proof: lyapunov stable}
\begin{proof}
We first consider $\mathbb{E}[V(x_k,u_k+\delta v_k)|\mathcal{F}_k]$, where $\mathcal{F}_k$ is the natural filtration at time step $k$.
    \begin{align*}
        \mathbb{E}[V(&x_k,u_k+\delta v_k)|\mathcal{F}_k]\\
        \leq&\mathbb{E}[\alpha_2\|x_k-h(u_{k-1}+\delta v_{k-1})\\
        &+h(u_{k-1}+\delta v_{k-1})-h(u_k+\delta v_{k})\|^2|\mathcal{F}_k]\\
        \leq&\mathbb{E}[2\alpha_2\|x_k-h(u_{k-1}+\delta v_{k-1})\|^2|\mathcal{F}_k]\\
        &+\mathbb{E}[2\alpha_2\|h(u_{k-1}+\delta v_{k-1})-h(u_k+\delta v_{k})\|^2|\mathcal{F}_k]\\
        \leq&2\frac{\alpha_2}{\alpha_1}V(x_{k},u_{k-1}+\delta v_{k-1})\\
        &+\mathbb{E}[2\alpha_2L_{h}^2\|u_{k-1}+\delta v_{k-1}-u_k-\delta v_{k}\|^2|\mathcal{F}_k]\\
        \leq&2\frac{\alpha_2}{\alpha_1}\!(\!1\!-\!\frac{\alpha_3}{\alpha_2}\!)\!V\!(\!x_{k\!-\!1},u_{k\!-\!1}\!+\!\delta v_{k\!-\!1}\!)\!+\!4\alpha_2L_{h}^2\!(\!\delta^2\!+\!(\!\delta\!+\!\frac{\eta}{2\delta}\!)^2)\\
        =&\mu V(x_{k-1},u_{k-1}+\delta v_{k-1})+a_1(2\delta^2+\eta+(\frac{\eta}{2\delta})^2).
    \end{align*}
    Apply this inequality $k$ times from $V(x_k,u_k+\delta v_k)$ to $V(x_0,u_0+\delta v_0)$, we obtain the desired result.
\end{proof}

\subsection{Proof of Lemma \ref{lemma: p}}
\label{proof: p}
\begin{proof}
For the Lipschitz property, we have $\forall u'$
    \begin{align*}
        |p_{u'}(u_1)-p_{u'}(u_2)|=&|\sigma\big(\tilde{\Phi}(u_1)\!-\!\tilde{\Phi}(u')\big)\!-\!\sigma\big(\tilde{\Phi}(u_2)\!-\!\tilde{\Phi}(u')\big)|\\
        \leq&L_{\sigma,0}L_{0}\|u_1-u_2\|.
    \end{align*}
For the smoothness property,
    \begin{align*}
        \|\nabla &p_{u'}(u_1)\!-\!\nabla p_{u'}(u_2)\|\\
        =&\|\sigma'(\tilde{\Phi}(u_1)\!-\!\tilde{\Phi}(u'))\nabla \tilde{\Phi}(u_1)\!-\!\sigma'(\tilde{\Phi}(u_1)\!-\!\tilde{\Phi}(u'))\nabla \tilde{\Phi}(u_2)\\
        &\!+\!\sigma'\!(\tilde{\Phi}(u_1)\!-\!\tilde{\Phi}(u'))\nabla \tilde{\Phi}(u_2)\!-\!\sigma'(\tilde{\Phi}(u_2)\!-\!\tilde{\Phi}(u'))\nabla \tilde{\Phi}(u_2)\|\\
        \leq& |\!\sigma'(\tilde{\Phi}(u_1)\!-\!\tilde{\Phi}(u'))\!|\!L_{1}\!\|u_1\!\!-\!u_2\!\|\!+\!L_{\sigma,1}L_{0}\|\!\nabla\tilde{\Phi}(u_2)\!\|\|\!u_1\!-\!u_2\!\|\\
        \leq&(\sigma'(0)L_{1}+L_{\sigma,1}L_{0}^2)\|u_1-u_2\|.\qedhere
    \end{align*}
\end{proof}

\subsection{Proof of Lemma \ref{lemma: unique solution}}
\label{proof: unique solution}
\begin{proof}
    Since $\sigma(t)$ is a strictly increasing function, $u^*$ that minimizes $\tilde{\Phi}(u)$ also minimizes $p_{u'}(u)$. Suppose, for contradiction, there exists two distinct minimizers of $p_{u'}(u)$, $u_1$ and $u_2$ such that $u_1\neq u_2$. Because $\sigma(t)$ is strictly increasing, it follows that $\tilde{\Phi}(u_1)=\tilde{\Phi}(u_2)$, implying that $\tilde{\Phi}(u)$ attains the minimum value at two different points. This contradicts the assumption that $\tilde{\Phi}(u)$ is strongly convex, which guarantees a unique minimizer. This concludes the proof.
\end{proof}

\subsection{Proof of Lemma \ref{lemma: bounded error}}
\label{proof: bounded error}
\begin{proof}
Let us consider $\|\mathbb{E}[e_k|\mathcal{F}_k]\|^2$.
    \begin{align*}
        \|\mathbb{E}[&e_k|\mathcal{F}_k]\|^2\\
        =&\|\frac{1}{\delta}\mathbb{E}[\sigma(\Phi(x_{k+1},u_{k}+\delta v_k)-\Phi(x_{k},u_{k-1}+\delta v_{k-1}))v_k\\
        &-\sigma(\Phi(h(u_{k}+\delta v_{k}),u_{k}+\delta v_k)\\
        &-\Phi(h(u_{k-1}+\delta v_{k-1}),u_{k-1}+\delta v_{k-1}))v_k\\
        &+\sigma(\Phi(h(u_{k}+\delta v_{k}),u_{k}+\delta v_k)\\
        &-\!\Phi(h(u_{k\!-\!1}\!+\!\delta v_{k\!-\!1}),u_{k\!-\!1}\!+\!\delta v_{k\!-\!1}))v_k|\mathcal{F}_k]\!-\!\nabla p_{u_k}(u_k)\|^2\\
        \leq&\frac{2}{\delta^2}\mathbb{E}[\|\sigma(\Phi(x_{k+1},u_{k}+\delta v_k)-\Phi(x_{k},u_{k-1}+\delta v_{k-1}))v_k\\
        &-\sigma(\Phi(h(u_{k}+\delta v_{k}),u_{k}+\delta v_k)\\
        &-\Phi(h(u_{k\!-\!1}\!+\!\delta v_{k\!-\!1}),u_{k\!-\!1}\!+\!\delta v_{k\!-\!1}))v_k\|^2|\mathcal{F}_k]\\
        &+2\|\nabla \tilde{p}_{u_{k-1}+\delta v_{k-1}}(u_k)-\nabla p_{u_{k-1}+\delta v_{k-1}}(u_k)\\
        &+\nabla p_{u_{k-1}+\delta v_{k-1}}(u_k)-\nabla p_{u_k}(u_k)\|^2\\
        \overset{\text{(s.1)}}{\leq}&\frac{4L_{\sigma,0}^2L_x^2}{\delta^2}\|x_{k}-h(u_{k-1}+\delta v_{k-1})\|^2\\
        &+\frac{4L_{\sigma,0}^2L_x^2}{\delta^2}\mathbb{E}[\|x_{k+1}-h(u_{k}+\delta v_{k})\|^2|\mathcal{F}_k]+2a_2^2\delta^2\\
        \leq&\frac{2L_{\sigma,0}^2L_x^2(\mu+1)}{\alpha_2\delta^2}\mu V(x_{k-1},u_{k-1}+\delta v_{k-1})\\
        &+\frac{2L_{\sigma,0}^2L_x^2a_1}{\alpha_2\delta^2}(2\delta^2+2\eta+(\frac{\eta}{\delta})^2)\mu+2a_2^2\delta^2,
    \end{align*}
    where in (s.1) we apply the bound from \cite[Lemma 1]{tang2023zeroth}, and perform analysis under the assumptions of smoothness and Lipschitz continuity.
\end{proof}

\subsection{Proof of Theorem \ref{theorem: main result}}
\label{proof: main result}
We first present a useful lemma. 
\begin{lemma}
\label{lemma: sequence}
    For non-negative sequence $a_k$ satisfying $\forall k, a_{k+1}^2\leq\rho a_{k}^2+b_ka_k+c$, where $\rho<1$, and $b_k$ is a non-increasing positive sequence, we have $\forall k'\geq 0$ and $\forall k> k', a_{k}^2\leq \rho'^{k-k'}a_{k'}^2+(a_{k'}^*)^2$, where $\rho' = (1-\frac{\sqrt{b_{k'}^2+4(1-\rho)c}}{2a_{k'}^*})$, $a_{k'}^*=\frac{b_{k'}+\sqrt{b_{k'}^2+4(1-\rho)c}}{2(1-\rho)}$.
\end{lemma}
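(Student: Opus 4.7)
The plan is to recognize $a_{k'}^*$ as the positive fixed point of the majorizing recursion $x \mapsto \sqrt{\rho x^2 + b_{k'} x + c}$, which follows from its definition: it is the positive root of $(1-\rho)x^2 - b_{k'} x - c = 0$, so $(a_{k'}^*)^2 = \rho (a_{k'}^*)^2 + b_{k'} a_{k'}^* + c$. A key algebraic simplification I would record up front is that, using $\sqrt{b_{k'}^2 + 4(1-\rho)c} = 2(1-\rho) a_{k'}^* - b_{k'}$, the claimed rate can be rewritten as $\rho' = \rho + \frac{b_{k'}}{2 a_{k'}^*}$, which is exactly the derivative at the fixed point of the map above; in particular $\rho' \in (\rho, 1)$ whenever $c > 0$.

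Next, I would use the non-increasing assumption on $b_k$ to replace $b_k$ by $b_{k'}$ for every $k \geq k'$ in the recursion, so that $a_{k+1}^2 \leq \rho a_k^2 + b_{k'} a_k + c$. Subtracting the fixed-point identity gives the clean identity
\begin{equation*}
a_{k+1}^2 - (a_{k'}^*)^2 \leq \rho \bigl(a_k^2 - (a_{k'}^*)^2\bigr) + b_{k'} (a_k - a_{k'}^*) = (a_k - a_{k'}^*)\bigl[\rho(a_k + a_{k'}^*) + b_{k'}\bigr].
\end{equation*}
Then I would split into two cases. In the case $a_k \geq a_{k'}^*$, the bound rearranges to $a_{k+1}^2 - (a_{k'}^*)^2 \leq \rho'(a_k^2 - (a_{k'}^*)^2)$: indeed the required inequality $\rho(a_k + a_{k'}^*) + b_{k'} \leq \rho' (a_k + a_{k'}^*)$ reduces to $b_{k'} \leq \frac{b_{k'}}{2 a_{k'}^*}(a_k + a_{k'}^*)$, i.e.\ to $a_k \geq a_{k'}^*$, which is the case hypothesis. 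In the case $a_k < a_{k'}^*$, I would use the monotonicity of $a \mapsto \rho a^2 + b_{k'} a + c$ on $a \geq 0$ to get $a_{k+1}^2 \leq \rho (a_{k'}^*)^2 + b_{k'} a_{k'}^* + c = (a_{k'}^*)^2$, so the sequence stays below the fixed point forever after.

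Combining the two cases by induction on $k \geq k'$ gives $a_k^2 - (a_{k'}^*)^2 \leq \rho'^{\,k-k'} \bigl(a_{k'}^2 - (a_{k'}^*)^2\bigr)_+$: either we remain above the fixed point and contract at rate $\rho'$ per step, or we drop to or below it and stay there, where the right-hand side is non-negative. Dropping the positive part and using $\rho' > 0$ to absorb $-\rho'^{\,k-k'}(a_{k'}^*)^2$ into the $(a_{k'}^*)^2$ term yields the stated bound $a_k^2 \leq \rho'^{\,k-k'} a_{k'}^2 + (a_{k'}^*)^2$.

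The main obstacle is the case analysis at the fixed point: the non-trivial algebra is checking that $\rho' = \rho + b_{k'}/(2 a_{k'}^*)$ is both the right contraction constant in Case 1 and is strictly less than one, and verifying that the quadratic majorant is monotone so that trajectories below $a_{k'}^*$ are absorbed in Case 2. Once these two observations are in place the induction is mechanical, and the non-increasing assumption on $b_k$ is used only to justify freezing $b_k$ at $b_{k'}$ throughout the tail of the sequence.
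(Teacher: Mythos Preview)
Your proof is correct and follows essentially the same route as the paper: both arguments split at the fixed point $a_{k'}^*$, show a one-step contraction $a_{k+1}^2 \le \rho' a_k^2 + (1-\rho')(a_{k'}^*)^2$ when $a_k \ge a_{k'}^*$, and use monotonicity of $a\mapsto \rho a^2 + b_{k'}a + c$ to trap the sequence once it falls below $a_{k'}^*$. Your presentation is a bit more transparent---explicitly noting $\rho' = \rho + b_{k'}/(2a_{k'}^*)$ and working with the shifted quantity $a_k^2 - (a_{k'}^*)^2$---but the substance of the argument is identical to the paper's.
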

\begin{proof}
$\forall k'\geq0$, if $a_i\geq a_{k'}^* \forall i\in \{k',\hdots, k-1\}$,
    \begin{align*}
        a_{i+1}^2\leq&\rho a_{i}^2+b_ia_i+c\leq(\rho-1) a_{i}^2+b_{k'}a_i+c+a_{i}^2\\
        \leq&(1-\frac{\sqrt{b_{k'}^2+4(1-\rho)c}}{2a_{k'}^*})a_{i}^2+\frac{\sqrt{b_{k'}^2+4(1-\rho)c}}{2}a_{k'}^*.
    \end{align*}
    Applying this inequality recursively, we have
        \begin{align*}
        a_{k}^2\leq&\rho'^{k-k'}a_{k'}^2+\frac{1}{1-\rho'}\frac{\sqrt{b_k^2+4(1-\rho)c}}{2}a_{k'}^*\\
        =&\rho'^{k-k'}a_{k'}^2 + (a_{k'}^*)^2.
    \end{align*}
    If $\exists i\in\{k',\hdots,k-1\}$ such that $a_i\leq a_{k'}^*$,
    \begin{align*}
        a_{i+1}^2\leq\rho a_{i}^2+b_ia_i+c\leq\rho a_{i}^2+b_{k'}a_i+c\leq(a_{k'}^*)^2.
    \end{align*}
    Since $\rho a_{i}^2+b_{k'}a_i+c$ is an increasing function when $a_i\geq 0$, we have $a_k^2\leq(a_{k'}^*)^2\leq\rho'^{k-k'}a_{k'}^2+(a_{k'}^*)^2$.
\end{proof}
Now, we are ready to present the main proof.
\begin{proof}
    \begin{align*}
    \mathbb{E}[\|&u_{k+1}-u^*\|^2|\mathcal{F}_k]\\
    =&\mathbb{E}[\|u_{k}\!-\!u^*\|^2\!-\!2\eta(\nabla p_{u_k}(u_k)\! + \!e_k)^{\top}(u_k\!-\!u^*)\! + \!\eta^2|\mathcal{F}_k]\\
    \leq&(1\!-\!2\sigma'(0)m\eta)\|u_{k}\!-\!u^*\|^2\!+\!2\eta\|\mathbb{E}[e_k|\mathcal{F}_k]\|\|u_k\!-\!u^*\|\! + \!\eta^2\\
    \leq&(1-2\sigma'(0)m\eta)\|u_{k}-u^*\|^2\\
    &+2\eta\sqrt{R_1V(x_{k-1},u_{k-1}\!+\!\delta v_{k-1})\!+\!R_2}\|u_k\!-\!u^*\|\! + \!\eta^2.
\end{align*}
\begin{align*}
    \mathbb{E}[\|&u_{k+1}-u^*\|^2]=\mathbb{E}[\mathbb{E}[\|u_{k+1}-u^*\|^2|\mathcal{F}_k]]\\
    \leq&(1-2\sigma'(0)m\eta)\mathbb{E}[\|u_{k}-u^*\|^2]\\
    &+2\eta\mathbb{E}[\sqrt{R_1V(x_{k-1},u_{k-1}\!+\!\delta v_{k-1})\!+\!R_2}\|u_k\!-\!u^*\|]\! + \!\eta^2\\
    \leq&(1-2\sigma'(0)m\eta)\mathbb{E}[\|u_{k}-u^*\|^2] + \eta^2\\
    &+\!2\eta\sqrt{\!R_1\!\mathbb{E}[V(x_{k\!-\!1},u_{k\!-\!1}\!+\!\delta v_{k\!-\!1})]\!+\!R_2}\sqrt{\mathbb{E}[\|u_k\!-\!u^*\|^2]},
\end{align*}
where $\mathbb{E}[V(x_{k\!-\!1},u_{k\!-\!1}\!+\!\delta v_{k\!-\!1})]\leq\mathbb{E}[V\!(x_0,u_0\!+\!\delta \!v_0)]\mu^{k\!-\!1}\!+\!\frac{a_1}{1\!-\!\mu}\!(\!2\delta^2\!+\!\eta\!+\!(\frac{\eta}{2\delta})^2)$.

Writing $\rho=1-2\sigma'(0)m\eta$, $b_k=2\eta\sqrt{b_1\mu^{k-1} + b_2}$, $c= \eta^2$ and applying Lemma \ref{lemma: sequence}, we have 
\begin{align*}
    \mathbb{E}[\|u_{k}&-u^*\|^2]\\
    \leq&\rho'^{k-k'}\mathbb{E}[\|u_{k'}-u^*\|^2] + (a_{k'}^*)^2\\
    \leq&(\frac{1\!+\!\rho}{2})^{k-k'}\mathbb{E}[\|u_{k'}\!-\!u^*\|^2]\!+\!\frac{b_1\mu^{k'-1}\! +\! b_2\! + \!2\sigma'(0)m\eta}{\sigma'(0)^2m^2},
\end{align*}
where $\rho' = \frac{b_{k'}+\rho\sqrt{b_{k'}^2+4(1-\rho)c}}{b_{k'}+\sqrt{b_{k'}^2+4(1-\rho)c}}$.
\end{proof}
\balance

\end{document}